\newtheorem{thm}{Theorem}[section]
\newtheorem{lemma}[thm]{Lemma}
\theoremstyle{definition}
\newtheorem{dfn}[thm]{Definition}
\begin{document}
\author{A. Zhukova }

\title{Discrete Morse theory for the barycentric subdivision}

	\maketitle \setcounter{section}{0}

	\begin{abstract}
We work with discrete Morse theory. Let $F$ be a discrete Morse function on a simplicial complex $L$. We construct a discrete Morse function $\Delta(F)$ on the barycentric subdivision $\Delta(L)$. The constructed function $\Delta(F)$  "behaves the same way" as $F$, i. e. has the same number of critical simplexes and the same gradient path structure. 
	\end{abstract}

\section{Introduction}

Discrete Morse theory is a discrete analogue of the classical Morse theory. It was developed by R. Forman \cite{for}. This theory can be applied to any simplicial and regular CW-complexes and, although its definition is quite simple, many classical results analogous to the ones of the continuous Morse theory arise in its scope.

We develop a way to "transfer" a discrete Morse function, defined on a simplicial complex, onto the barycentric subdivision of this complex in such a way that all important data about this function (i. e. the number and dimensions of the critical simplexes and the gradient path structure) stays unchanged. It can be done in several different ways and we can produce several different Morse functions. It can be useful in computing multiplication in cohomology ring of this simplicial complex \cite{for}. The main result of the paper is the following theorem.

\begin{thm}\label{main_thm}
	Let $F$ be a discrete Morse function on a simplicial complex $L$. Suppose that for every critical cimplex $\alpha \in Crit(F)$ the ordering $Ord_{\alpha}$ on its vertices is chosen. Then the pairing on the barycentric subdivision $\Delta(L)$ constructed in Section \ref{sec_pairings} defines a discrete Morse function $\Delta(F)$ on $\Delta(L)$ such that the following holds:
	
	\begin{enumerate}

		\item  The critical simplexes of $\Delta(F)$ are exactly those that have the chosen orderings $Ord_*$  as their labels. That is, every critical $k$-simplex of $F$ contains exactly one critical $k$-simplex of $\Delta(F)$ which can be chosen arbitrary before constructing $\Delta(F)$. This defines a bijection  $Crit(F) \rightarrow Crit(\Delta(F))$

		\item There exists a natural bijection $Gr(F) \rightarrow Gr(\Delta(F))$ that respects the bijection $Crit(F) \rightarrow Crit(\Delta(F))$ defined above.
		
	\end{enumerate}
\end{thm}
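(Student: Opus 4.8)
The plan is to phrase everything in terms of the gradient field $V$ of $F$ and the pairing $\Delta(V)$ produced in Section~\ref{sec_pairings}, and to invoke Forman's criterion: a regular pairing is the gradient field of a discrete Morse function if and only if it is a partial matching by codimension-one pairs with no nontrivial closed gradient path. Thus it suffices to verify three things about $\Delta(V)$. First I would check that $\Delta(V)$ is a well-defined matching: inspecting the construction, every flag $\sigma_0 < \dots < \sigma_m$ is assigned its partner by inserting or deleting a single barycenter $\hat\tau$, so two partnered flags differ in exactly one vertex of $\Delta(L)$, hence in codimension one, and the assignment is involutive, so no flag is used twice.

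\textbf{Critical cells (part (1)).} Next I would characterize the $\Delta(V)$-unmatched flags. The claim is that a flag is critical exactly when it is a \emph{full} flag $\sigma_0 \lessdot \sigma_1 \lessdot \dots \lessdot \sigma_k$ with $\dim\sigma_i=i$, whose top $\sigma_k=\alpha$ is a critical simplex of $F$ and whose induced order on the vertices of $\alpha$ is the prescribed $Ord_\alpha$. I would prove this by exhibiting, for any flag not of this shape, the canonical element whose insertion or deletion the construction prescribes -- coming from a non-critical top cell, a gap in the chain, or a disagreement with $Ord_\alpha$ -- so that such a flag is necessarily matched; conversely the designated full flags admit no move and stay critical. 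Since a full flag ending at a $k$-simplex is itself a $k$-simplex of $\Delta(L)$, the assignment $\alpha \mapsto (\text{flag of } Ord_\alpha)$ is a dimension-preserving bijection $Crit(F)\to Crit(\Delta(F))$.

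\textbf{Acyclicity and the gradient bijection (part (2)).} Here I would use the order-preserving projection $\pi$ sending a flag $\sigma_0<\dots<\sigma_m$ to its maximal element $\sigma_m$ in the face poset of $L$; note that $\pi$ carries each critical flag to the corresponding critical cell of $F$. The heart of the argument is a tracking lemma: each elementary up-then-down step of a $\Delta(V)$-gradient path projects under $\pi$ either to a stationary step or to one step of a $V$-gradient path in $L$. Granting this, a closed $\Delta(V)$-path projects to a closed $V$-path; acyclicity of $V$ forces the projection to be constant, reducing the question to a single fibre $\pi^{-1}(\sigma)$, where the induced matching is a collapse of a subdivided simplex and is acyclic by direct inspection -- so no closed $\Delta(V)$-path exists and $\Delta(F)$ is a genuine discrete Morse function. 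The same lift/projection correspondence, applied to paths running between critical cells, yields $Gr(F)\to Gr(\Delta(F))$: a $V$-path between critical cells lifts to a unique $\Delta(V)$-path between the associated critical flags, with $\pi$ providing the inverse and endpoints corresponding by construction, so the bijection respects $Crit(F)\to Crit(\Delta(F))$.

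\textbf{The main obstacle.} The delicate point is precisely the tracking lemma underlying both acyclicity and the gradient bijection. Because matched pairs of $\Delta(V)$ can cross simplices of $L$ -- inserting the barycenter of $V(\sigma_m)$ on top of a flag moves it to another fibre of $\pi$ -- the naive Patchwork/Cluster Lemma does not apply and one cannot simply match fibrewise. I would therefore control the interplay between the \emph{internal} moves (the collapse inside each simplex dictated by $Ord_\alpha$ and the local vertex order) and the \emph{external} moves (those encoding the pairs of $V$) by an induction along a Morse filtration of $L$, adding the pairs and critical cells of $V$ in an order compatible with $F$ and checking that each newly attached pair creates neither a closed path nor a spurious critical flag. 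Verifying that the projected path is always a legitimate $V$-path, with no backtracking introduced by the subdivision, is the crux; once it is in hand, parts (1) and (2) follow as above.
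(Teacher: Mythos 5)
Your overall architecture is the same as the paper's: project a path in $\Delta(F)$ to its sequence of carriers in $L$, combine acyclicity of $F$ with acyclicity of the local pairing inside a single simplex (the paper's Lemmata \ref{good_pair} and \ref{good_nonpair}), and obtain the bijection on gradient paths by projecting and lifting. The gap lies exactly at the point you yourself call the crux, and the tracking lemma you rely on is false as stated. An elementary step $\alpha_i \to \beta_i \to \alpha_{i+1}$ of a path in $\Delta(F)$ need not project to a stationary step or to a full pair-then-face step of $F$: the pair move $\alpha_i \to \beta_i$ can stay inside one carrier while the face move drops the carrier (a bare descent), or the pair move can cross from $\Delta(\alpha)$ into $\Delta(\beta)$ for an $F$-pair $(\alpha,\beta)$ while the face move stays inside $\Delta(\beta)$ (a bare ascent). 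Consequently the projection of a closed path in $\Delta(F)$ need not be a path of $F$ at all --- ascents and descents need not alternate --- so acyclicity of $F$ cannot be invoked to force the projection to be constant, and your reduction to a single fibre does not go through. Your proposed repair (an induction along a Morse filtration of $L$) is left entirely unspecified, so both the Morse property of $\Delta(F)$ and part (2) remain unproved.

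The missing step has an elementary fix, which is what the paper does in Lemma \ref{lemma_cyclic}: in the deduplicated carrier sequence, every ascent comes from a pair of $F$, so no two ascents are adjacent; each ascent raises dimension by exactly one, while each descent lowers it by at least one. A closed carrier sequence therefore either alternates strictly --- in which case it is a closed path of $F$, contradicting the acyclicity of $F$ --- or contains two adjacent descents, in which case the number of descents exceeds the number of ascents and the dimension strictly decreases around the loop, which is absurd. No filtration is needed. Note also that for part (2) you assert that a gradient path of $F$ lifts to a unique gradient path of $\Delta(F)$ without justification; this uniqueness is not automatic and is exactly what the paper's Lemmata \ref{lemma_noncritical}, \ref{path_pair} and \ref{path_nonpair} provide: inside $\Delta(\beta)$ an $n$-path is forced step by step once its exit facet is known, and inside a critical $\Delta(\alpha)$ there is exactly one $n$-path from $\alpha'$ to each boundary facet, which is what makes the backward (end-to-start) construction of the lift well defined and inverse to the projection.
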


It is worthy to mention in this respect that E.Babson and P. Hersch \cite{her}  introduced a technique that can be (as one particular application) used to build a certain Morse function on the barycentric subdivision of an arbitrary simplicial complex. This Morse function arises from a lexicographic order on the maximal chains of the simplexes of $L$, i.e. on the maximal simplexes of $\Delta(L)$. The question, whether there are connections between our work and \cite{her}, stays open.

The structure of this paper is as follows. In section \ref{sec_prelim} we give definitions of discrete Morse function and barycentric subdivision. We formulate our main theorem in this section as well. In section \ref{sec_pairings} we construct the Morse function on the barycentric subdivision of a simplicial complex and in section \ref{sec_proof} we prove that the constructed function satisfies the needed conditions.

\bigskip

\section{Preliminaries}\label{sec_prelim}

We start with the remindings.

\subsection*{Discrete Morse function on a regular simplicial complex \cite{for}}
Let $L$ be a regular simplicial complex. By $\alpha^p, \ \beta^p$ in this section we
denote  its $p$-dimensional simplexes, or \textit{$p$-simplexes}, for short.

 A \textit{discrete vector field} is a set of pairs
$$\big(\alpha^p,\beta^{p+1}\big)$$
 such that:
\begin{enumerate}
    \item  each simplex of the complex participates in at most one
    pair, and
    \item  in each pair, the simplex $\alpha^p$ is a facet of $\beta^{p+1}$.

\end{enumerate}

Given a discrete vector field, a \textit{path} of dimension $(p+1)$, or a \textit{$(p+1)$-path} is a sequence of
simplexes

$$ \beta_0^{p+1},\ \alpha_1^p,\ \beta_1^{p+1}, \ \alpha_2^p,\ \beta_2^{p+1} ,..., \alpha_m^p,\ \beta_m^{p+1},\ \alpha_{m+1}^p,$$
which satisfies the conditions:
\begin{enumerate}
    \item  Each $\big(\alpha_i^p,\ \beta_i^{p+1}\big)$ is a pair.
    \item Whenever $\alpha$ and  $\beta$ are neighbors in the path,
    $\alpha$ is a facet of $\beta$.
    \item $\alpha_i\neq \alpha_{i+1}$.
\end{enumerate}

Every path consists of "face"-steps (that is, transition from simplex $\beta_i^{p+1}$ to one of its faces $\alpha_{i+1}^p$) and "pair"-steps (that is, transition from simplex $\alpha_{i}^p$ to simplex $\beta_i^{p+1}$, following the pairing).

A path is \textit{closed} if $\alpha_{m+1}^p=\alpha_{0}^p$.

A \textit{discrete Morse function on a regular simplicial complex} is a
discrete vector field with no closed paths in it.

Assuming that a discrete Morse function is fixed, the \textit{critical simplexes} are those simplexes of the complex that
  are not paired.  We denote the set of all critical simplexes of a discrete Morse function $F$ with $Crit(F)$.

A \textit{ gradient $(p+1)$-path} of a discrete Morse function leading from
one critical simplex $\beta^{p+1}$ to some another  critical simplex
$\alpha^{p}$ is the sequence of simplexes:
$$\beta^{p+1},\ \ \alpha_1^p,\ \beta_1^{p+1},\ \alpha_2^p,\ \beta_2^{p+1},\ ...,\ \alpha_m^p,\ \beta_m^{p+1},\ \alpha^{p}$$

satisfying the three above conditions.

We denote the set of all gradient paths of a discrete Morse function $F$ with $Gr(F)$.

\subsection{Barycentric subdivision}

\begin{dfn}
	Let $L$ be a simplicial complex. Then its barycentric subdivision $\Delta(L)$ is a simplicial complex, such that the vertices of $\Delta(L)$ are in a bijectional correspondence with the ste of all simplexes of $L$ and the subset of vertices in $\Delta(L)$ forms a simplex iff the corresponding simplexes form a chain in the poset of all simplexes of $L$.
\end{dfn}

For any two simplexes $\alpha, \beta \in \Delta(L)$ we have $\alpha \in \beta$ iff the chain that corresponds to $\alpha$ is a subchain of the chain that corresponds to $\beta$.

The regular realization of $\Delta (L)$ can be constructed for any regular realisation of $L$ as follows. We realize every vertex of $\Delta(L)$ as the barycenter of the realization of the corresponding simplex of $L$, as depicted on Fig. \ref{fig_sub}. The realisation of a simplex $\alpha \in \Delta(L)$ lies in the interior of the realisation of a simplex $\beta \in L$ iff $\beta$ is the last simplex in the chain that corresponds to $\alpha$.

\begin{figure}[h]\label{fig_sub}
\centering
\includegraphics[width=10 cm]{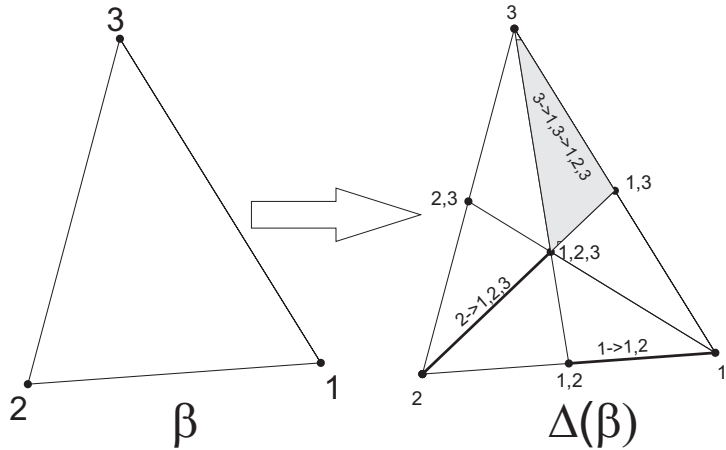}
\caption{Barycentric subdivision of a 2-simplex with the chains corresponding to some simplexes}
\end{figure}

To simplify our construction, we transform the chains of the simplexes of $L$ in the following way. We turn a chain
 $$ s_1\rightarrow s_2\rightarrow\dots\rightarrow s_{k+1}$$ that corresponds to the simplex $\gamma$ into the ordered set 
 $$\lambda(\gamma)=\{s_1, s_2 \backslash s_1, \dots, s_{k+1}  \backslash s_k \},$$ that we call the \textit{label} of $\gamma$. It is a linearly ordered partition of the set $s_{k+1}$. 
 
In this notation a simplex $\alpha' \in \Delta(L)$ is a face of a simplex $\alpha \in \Delta(L)$ iff the partition $\lambda(\alpha)$ can be turned into a refinement of the partition $\lambda(\alpha')$ by deleting some sets from the end of $\lambda(\alpha)$. We will use this geometrical picture in our proofs.

\medskip

\textbf{Example} The faces of the triangle in $\Delta(L)$ with the label $$(\{a,f\} \ \{d\} \ \{t\})$$ are the following:
\begin{itemize}
\item 1-dimensional (edges): $(\{a,f\} \{d,t\}), (\{a,f,d\} \ \{t\}), (\{a,f\} \ \{d\} )$.

\item 0-dimensional (vertices): $(\{a,f,d,t\}), (\{a,f,d\} ), (\{a,f\})$.
\end{itemize}
\medskip

\section{Pairing on the complex $\Delta(L)$}\label{sec_pairings}

Suppose we have a discrete Morse function $F$ on a regular simlicial complex $L$. In this section we define  a discrete vector field on the complex $\Delta(L)$. We deal with pieces of critical and non-critical simplexes of $L$ in two different ways.

\subsection{Non-critical simplexes}
	Let $(\alpha^{n-1},\beta^n) \in F$. We renumber their vertices so that $\alpha=[n]=\{1,2, \dots,n\},\beta=[n+1]=\{1,2, \dots, n+1\}$. This renumbering is almost arbitrary, except the vertex $n+1 = \beta \backslash \alpha$. The pairing that we define below do not depend on this renumbering, so it is made only for conveniency.
	
	 We  define pairings on all simplexes of $\Delta(L)$, that lie in the interiors of $\alpha$ and $\beta$. They are exactly all simplexes whose labels are the subdivisions of $[n]$ and $[n+1]$. Let $\gamma$ be a $k$-simplex with such label.	 Consider four possible cases:

	\begin{enumerate}
	\item   $\lambda(\gamma)$ is an oredered subdivision of $[n]$, i. e. $\gamma \in \Delta(\alpha)$. Then we obtain a pair for $\gamma$ by adding a singleton $\{n+1\}$ to the right end of $\lambda(\gamma)$. We get a $(k+1)$-simplex that belongs to the case 2.
	
	\item The entry $n+1$ forms a singleton  in $\lambda(\gamma)$ and it is the last set in $\lambda(\gamma)$. Then we obtain a pair for $\gamma$ by deleting $\{n+1\}$ from $\lambda(\gamma)$. We get a $(k-1)$-simplex  that belongs to the case 1.
				
	\item The entry $n+1$ forms a singleton  in $\lambda(\gamma)$ and it is not the last set in $\lambda(\gamma)$ Then we obtain a pair for $\gamma$ by uniting the singleton $\{n+1\}$ with the set that goes after it. We get a $(k-1)$-simplex  that belongs to the case 4.
	
	\item The entry $n+1$ lies in a non-singleton set in $\lambda(\gamma)$.  Then we obtain a pair for $\gamma$ by  splitting the  entry $n+1$ to the left from the set containing it and forming a singleton $\{n+1\}$. We get a $(k+1)$-simplex  that belongs to the case 3.

	\end{enumerate}  
	
	It is easy to see that every simplex takes part in exactly one pair. 
	
\textbf{Example} If $n+1=5$, then we will have such pairs of simplexes as the two below:
	
	$$((\{1\} \ \{3,4\} \  \{2, 5\}),\  (\{1\} \ \{3,4\} \  \{5\} \ \{2\})),$$
	
	and
	
	$$(\{1\} \ \{3,4\} \  \{2\}),\  (\{1\} \ \{3,4\} \  \{2\} \  \{ 5\}))).$$

\medskip

Now prove that this pairing defines a Morse function inside $\Delta(\beta)$.
\begin{lemma}\label{good_pair}
There are no cyclic paths in the vector field on $\Delta(\beta)$ defined above. 
\end{lemma}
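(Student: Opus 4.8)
The plan is to produce a potential function on the matched simplexes that strictly decreases along every gradient path; strict monotonicity then forbids any path from closing up. Concretely, for a simplex $\gamma$ whose label $\lambda(\gamma)$ is an ordered partition of $[n+1]$, let $\phi(\gamma)$ be the position, counted from the left, of the block of $\lambda(\gamma)$ that contains the entry $n+1$. I will show that along any gradient path $\beta_0,\ \alpha_1,\ \beta_1,\ \alpha_2,\dots$ the value $\phi(\beta_i)$ drops by exactly one at each step, and since $\phi$ is a positive integer it can never return to its initial value, so no path is closed.

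First I would note that in a closed path every simplex is matched, so each $\beta_i$ is the upper simplex of a pair, i.e. of type (Case 2) or (Case 3), while each $\alpha_i$ is the lower simplex of a pair, i.e. of type (Case 1) or (Case 4). The whole argument then reduces to a local analysis of one step $\beta_i \to \alpha_{i+1} \to \beta_{i+1}$: from $\beta_i$ I must determine which facets $\alpha_{i+1}\neq\alpha_i$ can serve as the lower simplex of a pair, and then apply the pairing to obtain $\beta_{i+1}$. The facets of $\beta_i$ are obtained either by merging two adjacent blocks of $\lambda(\beta_i)$ or by deleting its last block. A deletion of the last block erases the entry $n+1$ only when $\beta_i$ is of type (Case 2), and in that case the resulting facet is exactly the matched facet $\alpha_i$; otherwise $n+1$ survives in every facet, so no further facet is of type (Case 1). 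A facet is of type (Case 4) precisely when the singleton $\{n+1\}$ is merged with an adjacent block. When $\beta_i$ is (Case 2) the block $\{n+1\}$ is last, so its only neighbour lies to the left; when $\beta_i$ is (Case 3), merging $\{n+1\}$ with its right neighbour reproduces $\alpha_i$ (which is obtained in exactly that way), so the admissible choice is again the merge with the left neighbour. In both situations $\alpha_{i+1}$ is of type (Case 4), and applying the pairing, which splits $n+1$ off to the left of its block, yields a $\beta_{i+1}$ that is always of type (Case 3) with the block of $n+1$ moved one place leftward; hence $\phi(\beta_{i+1})=\phi(\beta_i)-1$.

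I expect the main obstacle to be the bookkeeping in this facet enumeration, namely verifying that every facet other than the selected one is either equal to $\alpha_i$ or fails to be matchable upward because it is of type (Case 2) or (Case 3) and so is itself a cofacet; one must also treat the boundary situation in which $\{n+1\}$ is already the first block of $\lambda(\beta_i)$, where merging to the left is impossible and the path simply has no legal continuation, which is consistent with $\phi$ having reached its minimum. Once the equality $\phi(\beta_{i+1})=\phi(\beta_i)-1$ is confirmed in each case, strict monotonicity of $\phi$ shows that no sequence of steps can return to its starting simplex, proving that the vector field on $\Delta(\beta)$ has no cyclic paths.
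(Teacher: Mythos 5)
Your proof is correct and follows essentially the same route as the paper: the paper's own argument also tracks the position of the entry $n+1$ in the labels and observes that every admissible face-step moves it one block to the left, so no path can close up. Your version merely makes this monotonicity explicit as a potential function $\phi$ and carries out the facet enumeration in more detail than the paper does.
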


\begin{proof}

Suppose we have a path $\Gamma$ in the pairing defined above. Consider the position of the entry $n+1$ in the labels of the simplexes in $\Gamma$. In every "pair"-step this entry exits some non-singleton set and forms a singleton. Therefore, every "face"-step, except maybe the first or the last step in the path is performed by adding this entry to the set next to it. If we add this entry to the right set, then during the next "pair"-step we will immediately return back to this simplex, which is forbidden. So every face-step is defined uniquely and the entry $n+1$ travels to the left side of the label during the path. Therefore, no path is cyclic.  
\end{proof}

As we will see in the Lemma \ref{lemma_proper}, we are interested only in $n$-paths inside $\Delta(\beta)$. The lemma below follows from the pairing construction and the definition of the barycentric subdivision.

	\begin{lemma}\label{lemma_noncritical}
		Let $\lambda(\gamma) = (I_1 \ I_2 \dots I_{n+1})$ be an $n$-simplex in $\Delta(\beta)$ (note that in this case all the sets $I_i$ are singletones). Then the following statements hold.
		\begin{enumerate}
		\item There is exactly one $(n-1)$-face of $\gamma$ that lies on the boundary of $\beta$ and it is the simplex with the label $( I_1 \ I_2 \dots  I_{n})$;
		
		\item The simplex $\gamma$ is paired with the $(n-1)$-simplex  given above  iff $I_{n+1}=\{n+1\}$;

		\item If $I_1 \neq \{n+1\}$, then there is exactly one $(n-1)$-face of $\gamma$, that is paired with another $n$-simplex of $\Delta(\beta)$.
		 It has the label obtained by uniting in $\lambda(\gamma)$ the singleton $\{n+1\}$ with the singleton that goes before it;
		 This $(n-1)$-face of  $\gamma$ is paired with the simplex $\gamma'$, which label can be obtained from $\lambda(\gamma)$ by interchanging the singleton $\{n+1\}$ with the singleton that goes before it.
		  
		\item If $I_1=\{n+1\}$, then there are no $(n-1)$-faces of $\gamma$ that are paired with another k-simplexes of $\Delta(\beta)$.
		
		\end{enumerate}
	\end{lemma}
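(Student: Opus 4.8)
The plan is to reduce every claim to the chain/label dictionary and then run the four pairing cases mechanically. First I would record the shape of $\gamma$: since it is an $n$-simplex of $\Delta(\beta)$, its label is a subdivision of $\beta=[n+1]$ into $n+1$ singletons, equivalently it encodes a maximal chain $J_1\subset J_2\subset\dots\subset J_{n+1}=[n+1]$ with $J_k=I_1\cup\dots\cup I_k$; let $j_0$ be the unique index with $I_{j_0}=\{n+1\}$. The $(n-1)$-faces of $\gamma$ are exactly the chains obtained by deleting one element $J_i$. Deleting $J_{n+1}$ removes the top $\beta$ from the chain and yields the label $(I_1\ I_2\dots I_n)$, whose underlying set is $[n]\subsetneq\beta$, while every other deletion keeps $\beta$ as the maximal element. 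Since a face lies on the boundary of $\beta$ precisely when $\beta$ is not the last simplex of its chain, this is exactly statement (1).

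Next I would compute the pairing of $\gamma$ itself. As $n+1$ sits in $\gamma$ as the singleton $I_{j_0}$, the simplex $\gamma$ falls under Case 2 when $j_0=n+1$ and under Case 3 when $j_0<n+1$. In the first situation the pair is obtained by deleting the final singleton $\{n+1\}$, which is precisely the boundary face $(I_1\ I_2\dots I_n)$ from (1); in the second situation the pair is obtained by merging $\{n+1\}$ with the block to its right, whose underlying set is still $[n+1]$ and therefore lies in the interior. Hence $\gamma$ is paired with the boundary face if and only if $I_{n+1}=\{n+1\}$, which is statement (2).

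For (3) and (4) I would examine the $n$ interior faces, deleting $J_i$ for $1\le i\le n$; deleting $J_i$ merges $I_i$ and $I_{i+1}$ into a single doubleton, all other blocks remaining singletons. Applying the pairing to such a face, if the doubleton does not contain $n+1$ then $n+1$ is still a singleton, so the face falls under Case 2 or Case 3 and is paired downward with an $(n-2)$-simplex. It is paired upward (Case 4, splitting $n+1$ off to the left) exactly when the doubleton contains $n+1$, i.e. for the merge index $i=j_0$ (merging $\{n+1\}$ with the block after it) or $i=j_0-1$ (merging it with the block before). Splitting $n+1$ back out of the merge-after doubleton simply reconstructs $\lambda(\gamma)$, so that face is paired with $\gamma$ itself; splitting it out of the merge-before doubleton instead places $\{n+1\}$ to the left of its former neighbour, producing the interchange $\gamma'$ of $\{n+1\}$ with the preceding singleton, a genuinely different $n$-simplex. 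I would then conclude: the merge-before face exists precisely when a singleton precedes $\{n+1\}$, i.e. when $I_1\ne\{n+1\}$, and it is the unique $(n-1)$-face paired with an $n$-simplex other than $\gamma$, giving (3); when $I_1=\{n+1\}$ the merge-before face is absent and the only upward-paired interior face is the merge-after one, which is paired with $\gamma$, so no face is paired with another $n$-simplex, giving (4).

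The only delicate point, and the step I expect to be the main (if mild) obstacle, is the bookkeeping in (3)--(4): one must carefully separate the interior face that is paired back with $\gamma$ itself from the one paired with the distinct simplex $\gamma'$, and verify that no face whose doubleton avoids $n+1$ can slip in as an upward pairing. Everything else is a direct reading of the four cases together with the boundary/interior criterion for chains, as the statement of the lemma anticipates.
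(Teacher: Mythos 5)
Your proof is correct in its essentials and follows exactly the route the paper intends: the paper in fact offers no written proof of this lemma at all, merely asserting that it ``follows from the pairing construction and the definition of the barycentric subdivision,'' so your mechanical case analysis via the chain/label dictionary is strictly more detailed than the source. One loose end should be tied up, however. Claims (3) and (4) quantify over \emph{all} $(n-1)$-faces of $\gamma$, but in that part of your argument you examine only the $n$ interior faces (the merges of consecutive singletons); the remaining face is the boundary face $(I_1\ I_2 \dots I_n)$ from claim (1), and it too must be ruled out as the pair of an $n$-simplex other than $\gamma$. When $I_{n+1}=\{n+1\}$ this follows from your own argument for (2): that face lies in $\Delta(\alpha)$ and is paired (Case 1) with $\gamma$ itself. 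When $I_{n+1}\neq\{n+1\}$ the boundary face lies in a facet of $\beta$ different from $\alpha$, so its label is a subdivision of neither $[n]$ nor $[n+1]$; it falls under none of the four pairing cases, and since every $n$-simplex of $\Delta(\beta)$ is paired by Case 2 or Case 3 with either a simplex of $\Delta(\alpha)$ or an interior $(n-1)$-simplex of $\Delta(\beta)$, this boundary face can never be the pair of any $n$-simplex of $\Delta(\beta)$. With that one observation added, your argument is complete; the rest of your bookkeeping --- in particular isolating the merge-after face as paired back to $\gamma$ and the merge-before face as paired to the interchange $\gamma'$ --- is exactly right.
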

	
	This lemma shows that if we construct an $n$-path that goes through $\Delta(\beta)$ we do not have much choice. We can start from any $(n-1)$-simplex  in $\Delta(\alpha)$ and follow the pairing. On every "face" step we can either go to the boundary of $\Delta(\beta)$ in the uniquely defined way ( Lemma \ref{lemma_noncritical}.1) or go further in the uniquely defined way (Lemma \ref{lemma_noncritical}.3), until we get to the simplexes that lie near the vertex $n+1$ (Lemma \ref{lemma_noncritical}.4).
	
		Informally, these paths form a "flow" from $\alpha$ in the direction of the vertex $p+1$ (see Fig. \ref{fig_rairing} for example).

\label{fig_rairing}\begin{figure}[h]
\centering
\includegraphics[width=10 cm]{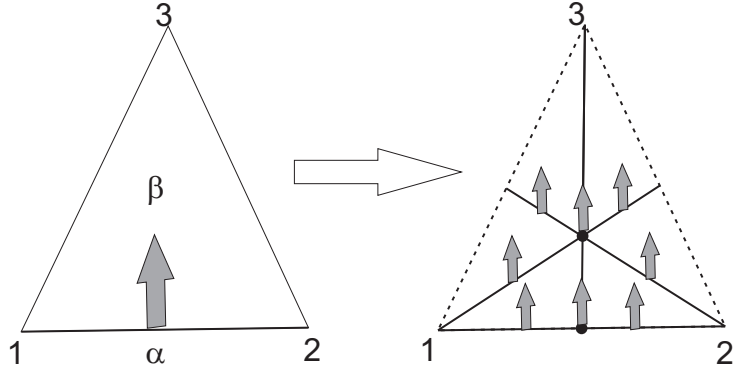}
\caption{The pairings on the barycentric subdivision of a pair $(\alpha^n, \beta^{n+1})$, for $n+1=3$}
\end{figure}

		 The next lemma follows from the above.

\begin{lemma}\label{path_pair}
Suppose that a gradient $n$-path of $\Delta(F)$ goes through $\Delta(\beta)$ and the last simplex of $\Delta(\beta)$ in this path is the $(n-1)$-simplex $\lambda(\gamma)=(I_1 \  I_2 \dots  I_{n})$. Suppose $I_j=\{n+1\}$.

Then  the first simplex of this path that lies in $\Delta(\beta)$ has the label

$$(I_1  \ I_2 \dots I_{j-1} \  I_{j+1}  \ \dots \ I_{n} \ [n+1] \backslash \bigcup_{i=1}^{n} I_i ).$$
\end{lemma}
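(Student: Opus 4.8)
The plan is to reconstruct the part of the gradient path that lies in $\Delta(\beta)$ by running it backward from the given exit simplex $\gamma$ with $\lambda(\gamma)=(I_1\ I_2\dots I_n)$, relying throughout on the fact, already extracted in the discussion following Lemma \ref{lemma_noncritical}, that inside $\Delta(\beta)$ the path has no freedom of choice. First I would fix the bookkeeping. The label $(I_1\dots I_n)$ has $n$ blocks, all singletons, so $\bigcup_{i=1}^n I_i$ is an $n$-element subset of $[n+1]$; since $I_j=\{n+1\}$ this union contains $n+1$, and the unique missing element $x:=[n+1]\setminus\bigcup_{i=1}^n I_i$ lies in $[n]$. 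Consequently $\gamma$ sits in the interior of the facet $[n+1]\setminus\{x\}$, which differs from $\alpha=[n]$, so $\gamma$ lies on $\partial\beta$; this is exactly why the path is able to leave $\Delta(\beta)$ here.

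Next I would locate the $n$-simplex immediately preceding $\gamma$. The transition into $\gamma$ is a face-step and $\gamma$ lies on $\partial\beta$, so by Lemma \ref{lemma_noncritical}.1 the only possibility is the unique interior $n$-simplex having $\gamma$ as its boundary facet, namely $T:=(I_1\dots I_n\ \{x\})$, in which $\{n+1\}$ occupies position $j$. The heart of the proof is then a single backward-propagation step, which I would read off Lemma \ref{lemma_noncritical}.3: if an $n$-simplex of the path carries $\{n+1\}$ in some position $p$ with $p\le n$, then the $n$-simplex preceding it in the path carries $\{n+1\}$ in position $p+1$ while keeping all the remaining blocks in their original order. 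Indeed the $(n-1)$-simplex joining the two is the doubleton face obtained by merging $\{n+1\}$ with its left neighbour, and it is paired precisely so as to interchange $\{n+1\}$ with that neighbour; the path condition $\alpha_i\neq\alpha_{i+1}$ forces the predecessor to be the other of the two $n$-simplices splitting this doubleton, so it is unique. The convenient invariant to record is that deleting the block $\{n+1\}$ from any $n$-simplex occurring in this part of the path returns one and the same ordered partition, $(I_1\dots I_{j-1}\ I_{j+1}\dots I_n\ \{x\})$.

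Iterating the backward step pushes $\{n+1\}$ steadily to the right, through positions $j,j+1,\dots$, and stops once $\{n+1\}$ reaches the final position $n+1$, at the simplex $T_0:=(I_1\dots I_{j-1}\ I_{j+1}\dots I_n\ \{x\}\ \{n+1\})$. By case $2$ of the pairing, $T_0$ is paired with the simplex obtained by deleting its last block, so the step into $T_0$ is the pair-step from $S_0:=(I_1\dots I_{j-1}\ I_{j+1}\dots I_n\ \{x\})$. It remains to see that $S_0$ really is the first simplex of the path inside $\Delta(\beta)$. Its union is $[n]$, so $S_0$ lies in $\Delta(\alpha)\subset\partial\beta$; and applying the face criterion of Section \ref{sec_prelim} to the singleton label $\lambda(S_0)$ shows that the only $n$-simplex of $\Delta(\beta)$ admitting $S_0$ as a face is $T_0$ itself. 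Hence the face-step that delivered the path to $S_0$ must have come from outside $\Delta(\beta)$, so $S_0$ is the entry simplex; since $\{x\}=[n+1]\setminus\bigcup_{i=1}^n I_i$, this is exactly the asserted label.

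The step I expect to be most delicate is the backward-propagation itself: to justify it cleanly one must verify that every intermediate $(n-1)$-simplex of the path has union $[n+1]$ (hence lies in the interior of $\beta$ and not on its boundary), that exactly two interior $n$-simplices meet along each such $(n-1)$-simplex, and that these two are forced to be the successor and the predecessor in the path. Keeping simultaneous track of the position of $\{n+1\}$ and of the union of the label throughout the traversal is what makes all of these local checks uniform and lets the induction close.
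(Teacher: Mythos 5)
Your proof is correct and follows essentially the same route as the paper, which states this lemma without proof as a consequence of Lemma \ref{lemma_noncritical}: inside $\Delta(\beta)$ the path is forced, entering from $\Delta(\alpha)$ by the pair-step appending $\{n+1\}$ and then repeatedly interchanging $\{n+1\}$ with its left neighbour until it exits by dropping the last block. Your backward reconstruction from the exit simplex is this forced structure read in reverse, with the details (uniqueness of the interior cofacet, the invariant ordering of the remaining blocks, and the identification of the entry simplex via the face criterion) spelled out more carefully than in the paper.
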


\textbf{Example} 

Let $n+1=5$. If an n-path  goes through $\Delta(\beta)$ and the last simplex of $\Delta(\beta)$ in this path is the $(n-1)$-simplex labeled $(\{1\} \ \{3\} \ \{5\} \ \{4\} )$, then the first simplex of $\Delta(\beta)$ in this path is the simplex labeled $(\{1\} \ \{3\} \ \{4\} \ \{2\})$ and the path looks the following way:

 $$(\{1\} \ \{3\} \ \{4\} \ \{2\})$$
 $$(\{1\} \ \{3\} \ \{4\} \ \{2\} \ \{5\})$$
  $$(\{1\} \ \{3\} \ \{4\} \ \{2,5\})$$  
   $$(\{1\} \ \{3\} \ \{4\} \ \{5\} \ \{2\})$$
      $$(\{1\} \ \{3\} \ \{4, 5\} \ \{2\})$$
  $$(\{1\} \ \{3\} \ \{5\} \ \{4\} \ \{2\})$$
    $$(\{1\} \ \{3\} \ \{5\} \ \{4\}).$$
  
\bigskip

\subsection{Critical simplexes} 

Let $\alpha^n$ be a critical $n$-simplex of $F$. We renuber its vertices so that $\alpha=[n+1]=\{1,2, \dots , n+1\}$.  We will pair the simplexes of $\Delta(L)$ that lie in the inner part of $\alpha$, i. e. those which labels are the ordered subdivisions of the set $[n+1]$. We leave only one $n$-simplex non-paired, namely the simplex with the label 

 $$\lambda(\alpha')=(\{n+1\} \ \{n\} \dots \{1\} ).$$

 This simplex depends on our renumbering, and, given an arbitrary ordering $Ord_{\alpha}$, we can renumber the vertices in $\alpha$ in the order opposite to $Ord_{\alpha}$ to get $\lambda(\alpha')= Ord_{\alpha}$.

\medskip
Let $\gamma$ be a $k$-simplex in $\Delta(\alpha)$ with the label $\lambda(\gamma)= (I_1 \ I_1 \dots I_{k+1})$. Let $i$ be the length of the greatest common suffix of $\lambda(\gamma)$ and $\lambda(\alpha')$ (i. e. for every $j\leq i$ the j-th from the end sets in $\lambda(\gamma)$ and $\lambda(\alpha')$ are equal). Three cases are possible:

\begin{enumerate}
\item $i=n$ Then $\gamma=\alpha'$ and we do not pair it;
\item $i<n$ and the entry $i+1$ makes a singleton in $\lambda(\gamma)$. Then we pair the simplex $\gamma$ with a $(k-1)$-simplex, which label is obtained from $\lambda(\gamma)$ by uniting this singleton with the set that goes after it. This does not change the greatest common suffix with $\lambda(\alpha')$.
\item $i<n$ and the entry $i+1$ lies in a non-singleton set in $\lambda(\gamma)$.  Then we pair the simplex $\gamma$ with a $(k+1)$-simplex, which label is obtained from $\lambda(\gamma)$ by splitting this entry from this set to the left of it and forming a singletone. This does not change the greatest common suffix with $\lambda(\alpha')$
\end{enumerate}

We always pair one simplex of type $2$ with one simplex of type $3$, so this pairing  is well-defined.

      \textbf{Example.} For $n=5$ we build such pairs as
       $$((\{1,2\} \ \{3\} \ \{4,5\}), \ (\{1\} \ \{2 \} \ \{3\} \ \{4,5\})),$$ 
       where the length of the greatest common suffix with $\lambda(\alpha')$ is $0$, and 
      $$((\{4,5\} \ \{3\} \ \{2\} \ \{1\}),  \ (\{4\} \ \{5\} \ \{3\} \ \{2\} \ \{1\})),$$
where the length of the greatest common suffix with $\lambda(\alpha')$ is $3$.
\medskip 
Now we prove that this pairing forms a Morse function on $\Delta(\alpha)$.

\begin{lemma}\label{good_nonpair}
There are no cyclic paths in the vector field on $\Delta(\alpha)$ defined above.
\end{lemma}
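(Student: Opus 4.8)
The plan is to imitate the proof of Lemma~\ref{good_pair}: to each simplex we attach a numerical statistic that can only move in one direction along a path, so that a closed path becomes impossible. Two nested statistics are needed. The outer one is the length $\ell$ of the greatest common suffix of $\lambda(\gamma)$ with $\lambda(\alpha')$, which governs the three cases of the construction. By construction the pairing (cases 2 and 3) does not change $\ell$, so $\ell$ is constant on every pair-step. I first claim that $\ell$ cannot increase on a face-step either. A face-step replaces $\beta^{p+1}$ by one of its facets; in the language of labels a facet is obtained either by merging two neighbouring blocks of $\lambda(\beta)$ or by deleting the last block. Deleting the last block produces an ordered partition of a proper subset of $[n+1]$, i.e. a simplex outside the interior of $\alpha$, which carries no pairing and so cannot occur in a path of our vector field; hence every face-step is a merge of two neighbouring blocks. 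Merging two blocks never creates new trailing singletons matching the tail $\dots\{2\}\{1\}$ of $\lambda(\alpha')$, so it can only preserve or shorten the common suffix. Consequently $\ell$ is non-increasing along any path, and on a closed path $\ell$ must be constant.

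Fix now a closed path and let $\ell$ be its (constant) value; set $a=\ell+1$, the \emph{active entry}. Since $\ell<n$ throughout (a closed path never meets the unpaired critical simplex $\alpha'$), every simplex of the path is of type 2 or 3, and in all of them the entry $a$ is the one being split off or united. In a type-3 simplex $\alpha_j$ the entry $a$ sits inside a non-singleton block $B$; the pair-step $\alpha_j\to\beta_j$ splits $a$ off to the left, so in $\beta_j$ the singleton $\{a\}$ stands immediately to the left of $B\setminus\{a\}$. The next face-step must land on a type-3 simplex with the same $\ell$, i.e. on a simplex in which $a$ again lies in a non-singleton block. The only facets of $\beta_j$ with this property are obtained by merging the singleton $\{a\}$ with one of its two neighbours: merging it with the block on its right restores $\alpha_j$ and is forbidden, while any merge not involving $\{a\}$ leaves $a$ a singleton (hence gives a type-2, not a type-3, simplex) or shortens $\ell$. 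Therefore the face-step is forced: it merges $\{a\}$ with the block immediately to its left.

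This gives the inner statistic. Let $b(\gamma)$ be the index, counted from the left, of the block of $\lambda(\gamma)$ containing $a$. The splitting pair-step leaves $b$ unchanged (the singleton $\{a\}$ occupies the former position of $B$), while the forced face-step merges $\{a\}$ into the block on its left and so decreases $b$ by exactly one. Thus along the closed path $b$ is non-increasing, strictly decreasing on every face-step. Since a closed path alternates pair- and face-steps and returns to its starting simplex, the total change of $b$ around it is zero; but it contains at least one face-step, forcing a strict net decrease — a contradiction. Hence no closed path exists.

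The main obstacle is the middle step: verifying that along a path of constant $\ell$ the continuing face-step is uniquely determined. This rests on the facet description of ordered-partition simplexes (merge a neighbouring pair of blocks, or delete the last block) together with the type classification of the construction, and one must be careful to exclude the facets that leave the interior of $\alpha$ and those that keep $a$ a singleton. Once these are ruled out, the monotonicity of $b$ and the resulting contradiction are immediate.
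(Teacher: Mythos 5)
Your proof is correct and takes essentially the same route as the paper's own argument: the paper likewise uses the greatest common suffix with $\lambda(\alpha')$ as an outer statistic that never grows along a path, and then observes that when it stays constant of length $i$, the entry $i+1$ is forced to travel strictly to the left through the label, so no path can be cyclic. Your write-up merely makes explicit the facet analysis (merge of adjacent blocks versus deletion of the last block) and the position statistic $b$ that the paper leaves implicit.
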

\begin{proof}
Let $\Gamma$ be a path in this field. Consider the greatest common suffix of the labels of simplexes in $\Gamma$ and $\lambda(\alpha')$. It never grows during the path. If it is getting shorter, then the path cannot be cyclic.

 Suppose this suffix stays the same during $\Gamma$ and its length is $i$. Then all the "pair"-steps in $\Gamma$ are performed by splitting the entry $i+1$ from non-singleton set to the left. Therefore, all the "face"-steps in  $\Gamma$ except, maybe, the first and the last one are performed by adding this entry to the set on the left side. So, the entry $i+1$ travels to the left in the label and $\Gamma$ cannot be cyclic.

\end{proof}

\begin{lemma}\label{path_nonpair}
For every $(n-1)$-simplex $\gamma \in \Delta(\alpha)$ that lies  on the boundary of $\alpha)$ there is a unique $n$-path in $\Delta(\alpha)$ that starts in $\alpha'$ and exits $\Delta(\alpha)$ through $\gamma$.
\end{lemma}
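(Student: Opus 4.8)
The plan is to analyze the gradient flow in reverse. The key observation is that, although a gradient $n$-path emanating from $\alpha'$ may branch in the forward direction, tracing such a path backward from its exit face is completely deterministic. I would therefore run this backward procedure starting from each boundary $(n-1)$-simplex $\gamma$ and prove that it always terminates at $\alpha'$; this yields existence and uniqueness simultaneously.

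First I would record three combinatorial facts about $\Delta(\alpha)$. (i) A boundary $(n-1)$-simplex $\gamma$ has all blocks singletons with $\bigcup_i I_i = [n+1]\setminus\{v\}$ for a unique omitted vertex $v$, and it has exactly one coface in $\Delta(\alpha)$: since merging two adjacent blocks always produces a block of size at least two, no coface can arise from a merge, so the only coface is the $n$-simplex obtained by appending $\{v\}$ to $\lambda(\gamma)$. (ii) Every interior $(n-1)$-simplex has exactly one block of size two, hence exactly two cofaces, obtained by splitting that block in either order. (iii) By the pairing rules for a critical simplex, $\alpha'$ is the only unpaired $n$-simplex in $\Delta(\alpha)$, and every other $n$-simplex is paired downward (a type-$2$ pair) with a uniquely determined interior $(n-1)$-simplex; moreover distinct $n$-simplexes have distinct pair-partners.

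Using these, I would define the backward trajectory of $\gamma$ as follows: let $\beta_m$ be the unique coface of $\gamma$ from (i); given an $n$-simplex $\beta_j \neq \alpha'$, let $\gamma_j$ be its unique down-partner (which is interior), and let $\beta_{j-1}$ be the coface of $\gamma_j$ other than $\beta_j$, well defined by (ii); repeat. Reading the resulting sequence forward gives $\alpha', \gamma_1, \beta_1, \dots, \gamma_m, \beta_m, \gamma$. Each $(\gamma_j,\beta_j)$ is a pair by construction, each $\gamma_j$ is a facet of both $\beta_{j-1}$ and $\beta_j$, and consecutive facets differ ($\gamma_j \neq \gamma_{j+1}$, and $\gamma_m \neq \gamma$ since $\gamma$ is a boundary simplex while $\gamma_m$ is interior) because the relevant $n$-simplexes are distinct by construction and distinct $n$-simplexes have distinct pair-partners. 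Hence the sequence is a genuine gradient $n$-path exiting through $\gamma$. Uniqueness is then immediate: every step of the backward reconstruction is forced, so any gradient $n$-path from $\alpha'$ exiting through $\gamma$ must coincide with this one.

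The point that requires care, and the place where Lemma \ref{good_nonpair} is essential, is that the backward procedure terminates exactly at $\alpha'$. By (iii) the backward step is available at every $n$-simplex except $\alpha'$, so the trajectory can only stop at $\alpha'$; it remains to rule out running forever. The $n$-simplexes it visits cannot repeat, since a repetition would close the forward path into a cyclic path in the vector field on $\Delta(\alpha)$, contradicting Lemma \ref{good_nonpair}. As $\Delta(\alpha)$ is finite, the trajectory must halt, and therefore halts at $\alpha'$, establishing existence. Letting $\gamma$ range over all boundary $(n-1)$-simplexes, the construction further shows that these paths are pairwise distinct and exhaust the gradient $n$-paths issuing from $\alpha'$, which is the bijective picture underlying the statement.
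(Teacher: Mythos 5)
Your proof is correct, but it takes a genuinely different route from the paper's. The paper works directly with the labels: reusing the invariant from the proof of Lemma \ref{good_nonpair}, it observes that the greatest common suffix with $\lambda(\alpha')$ never grows along a path and that the entry $i$ can move only while this suffix has length exactly $i-1$, so along any path out of $\alpha'$ the entries freeze into decreasing order one by one; this forces uniqueness, and existence is then settled by an explicit construction (sketched via an example). You instead argue structurally: $\Delta(\alpha)$ is a simplicial $n$-ball in which every interior $(n-1)$-simplex has exactly two $n$-cofaces and every boundary $(n-1)$-simplex exactly one, and every $n$-simplex other than $\alpha'$ is matched downward, so the reverse flow from $\gamma$ is deterministic; acyclicity (Lemma \ref{good_nonpair}) plus finiteness force it to terminate, and it can only terminate at $\alpha'$. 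The trade-off: the paper's argument produces the path explicitly and yields combinatorial information about it (the order in which the entries settle), but its existence half is only sketched; yours proves existence and uniqueness simultaneously and rigorously, uses Lemma \ref{good_nonpair} only as a black box rather than re-running its suffix argument, and is more general --- it applies verbatim to any acyclic matching with a single unpaired top-dimensional cell on a complex with this pseudomanifold-with-boundary property. The only detail you leave implicit is in the uniqueness step: for an arbitrary path from $\alpha'$ to $\gamma$, the simplex $\beta_{j-1}$ must be the coface of $\gamma_j$ \emph{other} than $\beta_j$, which follows from condition (3) in the definition of a path (and, at the first step, from $\alpha'$ being unpaired) --- the same observation you already make explicitly in the construction direction.
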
	 

\begin{proof} As we already know, there is a unique $n$-simplex $\gamma' \in \Delta(\alpha)$ that has $\gamma$ on its boundary. Our path must go through $\gamma'$.

The greatest common suffix with $\lambda(\alpha')$ can only decrease during the path (at the start its length is $n+1$). For arbitrary $i$, the entry $i$ moves inside the label during the path only when this suffix has length $i-1$. So by the time the length of this suffix becomes smaller than $i-1$, the permutation of the entries $i, i+1, \dots n+1$ in the label is fixed and does not change any more. If such a path exists, then all entries take their places in the desreasing order, and this imply the uniqueness of the path. Knowing that, it is not hard to construct such a path. For example, if $n=4$ and $\gamma=(\{2\} \ \{4\} \ \{5\} \ \{1\})$, then $\gamma'=(\{2\} \ \{4\} \ \{5\} \ \{1\} \ \{3\})$ and the path looks as follows:

$$\alpha'=(\{5\} \ \{4\} \ \{3\} \ \{2\} \ \{1\})$$
$$(\{4, 5\} \ \{3\} \ \{2\} \ \{1\})$$
$$(\{4\} \ \{ 5\} \ \{3\} \ \{2\} \ \{1\})$$
$$(\{4\} \ \{ 5\} \ \{2,3\} \ \ \{1\})$$
$$(\{4\} \ \{ 5\} \ \{2\} \ \{3\} \ \{1\})$$
$$(\{4\} \ \{ 2,5\} \ \{3\} \ \{1\})$$
$$(\{4\} \ \{ 2\} \ \{5\} \ \{3\} \ \{1\})$$
$$(\{2,4\} \ \{5\} \ \{3\} \ \{1\})$$
$$(\{2\} \ \{ 4\} \ \{5\} \ \{3\} \ \{1\})$$
$$(\{2\} \ \{ 4\} \ \{5\} \ \{1,3\})$$
$$\gamma'=(\{2\} \ \{4\} \ \{5\} \ \{1\} \ \{3\})$$
$$\gamma=(\{2\} \ \{4\} \ \{5\} \ \{1\})$$
\end{proof}

\begin{figure}[h]\label{fig_noncritical}
\centering
\includegraphics[width=10 cm]{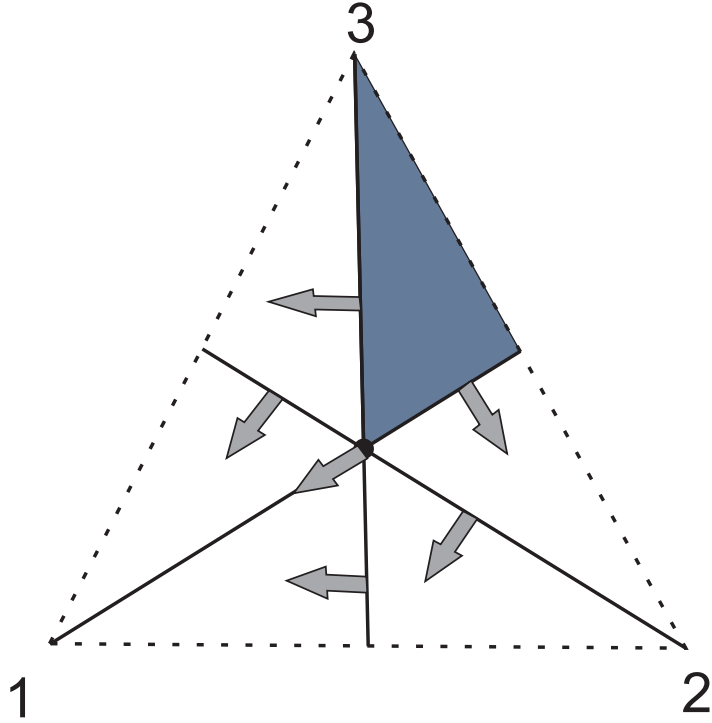}
\caption{The pairings on the barycentric subdivision of a simplex $\alpha$ for $n=2$}
\end{figure}

Any gradient path that contains $n$-simplexes from $\Delta(\alpha)$ can look one of the two 
 following ways.
 \begin{enumerate}
 \item It starts from $\alpha'$ and make steps inside $\Delta(\alpha)$. Then at some "face"-step this path comes out of $\Delta(\alpha)$ through a $(n-1)$-simplex on the boundary. According to Lemma \ref{path_nonpair}, this path is defined by its exit simplex uniquely.

\item It comes into $\alpha'$ at some "pair"-step from some $(n+1)$-simplex $\gamma \in \Delta(L)$. From the structure of the barycentric subdivision follows, that simplex $\gamma$ lies in the barycentric subdivision of some $(n+1)$-simplex $\beta$ of $L$, and the simplex $\alpha$ is a facet of $\beta$. This path is an $(n+1)$-path and it ends at $\alpha'$.

\end{enumerate}

\section{Paths on the barycentric subdivision}\label{sec_proof}

The Theorem \ref{main_thm}.1 follows from the construction of $\Delta(F)$. In this section we prove the rest of  the Theorem \ref{main_thm}. 

\begin{lemma}\label{lemma_cyclic}
Let $F$ be a discrete Morse function on a simplicial complex $F$. Then the pairing $\Delta(F)$ constructed in the section \ref{sec_pairings} is a Morse function
\end{lemma}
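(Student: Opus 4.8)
The plan is to verify the two defining properties of a discrete Morse function in Forman's sense: that $\Delta(F)$ is a discrete vector field, and that it admits no closed path. The first is immediate from the construction of Section~\ref{sec_pairings}. Every simplex $\gamma$ of $\Delta(L)$ lies in the interior of exactly one simplex $\sigma(\gamma):=\bigcup\lambda(\gamma)$ of $L$ (the top of its chain), and the pairing is defined separately on each such interior, by the non-critical rules when $\sigma(\gamma)$ is matched in $F$ and by the critical rules when $\sigma(\gamma)\in Crit(F)$; each local rule matches simplexes in consecutive dimensions, so $\Delta(F)$ is a well-defined discrete vector field. To rule out closed paths I would set up a global Lyapunov function. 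Let $f$ be a real-valued discrete Morse function on $L$ inducing $F$ (Forman, \cite{for}), chosen injective, so that $f(\rho)<f(\tau)$ whenever $\rho$ is a proper face of $\tau$, except when $(\rho,\tau)\in F$ is a matched pair, in which case $\rho$ is a facet and $f(\rho)>f(\tau)$. Put $g(\gamma):=f(\sigma(\gamma))$. The heart of the argument is to show that $g$ never increases along a path of $\Delta(F)$, and strictly decreases whenever a step changes $\sigma$.

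I would check this against the case lists of Section~\ref{sec_pairings}. A face-step either merges two adjacent blocks of a label, leaving $\sigma$ and hence $g$ unchanged, or deletes the last block $S$, dropping $\sigma$ to the proper face $\sigma\setminus S$. A pair-step raises dimension; inspecting the non-critical cases $1$--$4$ and the critical cases $1$--$3$ shows that the only one changing $\sigma$ is non-critical case~$1$ (appending the singleton of the matched vertex), where $\sigma$ passes from the lower simplex $\rho$ to the upper simplex $\tau$ of a matched pair. Since $f(\rho)>f(\tau)$, here $g$ strictly decreases. Every remaining pair-step (splitting or uniting the distinguished entry, or any step inside a critical cell) fixes $\sigma$, and so fixes $g$.

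\textbf{The main obstacle} is the single face-step that threatens to \emph{raise} $g$: a delete-last step from $\beta_i$ onto a matched facet $\rho=\sigma(\beta_i)\setminus\{v\}$, where $f(\rho)>f(\sigma(\beta_i))$. I would rule it out exactly as the local construction forces. If $\sigma(\beta_i)=\tau$ is the upper element of the pair $(\rho,\tau)$ and the deleted block is the singleton $\{v\}$ of the matched vertex $v=\tau\setminus\rho$, then $\beta_i$ falls under non-critical case~$2$, whose prescribed pairing is precisely the deletion of that last singleton. Hence the offending facet is the $\Delta(F)$-partner of $\beta_i$, that is, the simplex $\alpha_i$ preceding $\beta_i$ in the path, and the path condition $\alpha_{i+1}\neq\alpha_i$ forbids taking it. Deleting any other last block lands on an \emph{unmatched} face, where $f$ decreases; in particular a block of size $\ge 2$ produces a face of codimension $\ge 2$, which is automatically unmatched. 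Thus no step of any path raises $g$.

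Finally I would conclude by contradiction. If $\Delta(F)$ had a closed path $\Gamma$, then $g$ would be constant on $\Gamma$, so no step could change $\sigma$; therefore $\sigma$ is constant along $\Gamma$ and the whole path lies in the interior of a single simplex $s\in L$. Since the top-dimensional simplexes $\beta_i$ of $\Gamma$ are upper elements of $\Delta(F)$-pairs, while every simplex interior to a lower matched simplex is a \emph{lower} element (non-critical case~$1$), the simplex $s$ cannot be a lower matched simplex: it is either critical or an upper matched simplex. In the first case Lemma~\ref{good_nonpair}, and in the second Lemma~\ref{good_pair}, already excludes closed paths inside $\Delta(s)$, a contradiction. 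Hence $\Delta(F)$ has no closed path and is a discrete Morse function.
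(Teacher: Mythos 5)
Your route is genuinely different from the paper's. The paper proves this lemma by projecting a hypothetical closed path of $\Delta(F)$ to the sequence of carrier simplexes in $L$, deleting repetitions, and noting that the resulting cyclic sequence would have to be either a path of $F$ (impossible, since $F$ is a Morse function) or a sequence whose dimension drops strictly more often than it rises (impossible for a cycle); no auxiliary real-valued function appears. Your plan instead pulls back a Lyapunov function $g=f\circ\sigma$ and reduces a closed path to one living in a single cell, where Lemmas \ref{good_pair} and \ref{good_nonpair} finish the job. Much of your case analysis is correct: the only $\sigma$-changing pair-step is indeed the non-critical case $1$/$2$ pairing, and your exclusion of the delete-last-singleton step via $\alpha_{i+1}\neq\alpha_i$ is exactly right. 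However, there is a genuine gap at the foundation: the function $f$ you postulate is not supplied by Forman's theory. A discrete Morse function in Forman's sense constrains only \emph{facets} (codimension one), so a Forman function inducing $F$ need not satisfy $f(\rho)<f(\tau)$ for unmatched faces of codimension $\geq 2$. Concretely, on the full triangle $abc$ put $f(b)=0$, $f(bc)=1$, $f(c)=2$, $f(ab)=5$, $f(abc)=6$, $f(a)=10$, $f(ac)=11$: this is a legitimate discrete Morse function, its induced acyclic matching is $\{(a,ab),(c,bc),(ac,abc)\}$, and yet $f(a)>f(abc)$ although $a$ is an unmatched codimension-two face of $abc$. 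This invalidates precisely the step where you assert that deleting a last block of size $\geq 2$ lands on a face which is ``automatically unmatched,'' where ``$f$ decreases'': unmatched it is, but for such an $f$ the value can increase, so $g$ is not a Lyapunov function for an arbitrary Forman function inducing $F$.

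The statement you actually need --- that every acyclic matching $F$ admits an injective $f$ which increases along \emph{all} face relations except the matched pairs, where it decreases --- is true, but it is a theorem, not a citation: it amounts to acyclicity of the directed graph built from the full face relation (not just the Hasse diagram) with matched edges reversed, or equivalently to the characterization of acyclic matchings by linear extensions in which matched pairs are consecutive; and the standard proof of that fact is essentially the same alternation-plus-dimension-count argument that constitutes the paper's entire proof of this lemma. So, as written, your argument rests on an unproved assertion of the same depth as the statement being proved. Two honest repairs exist. (i) Prove that existence statement first; then your reduction is clean. (ii) Keep an arbitrary Forman function $f$ inducing $F$ and prove the weaker lemma you actually use: if $\omega\subset\tau$ and no facet of $\tau$ containing $\omega$ is matched with $\tau$, then $f(\omega)<f(\tau)$. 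This follows by induction on codimension from Forman's axiom that each simplex has at most one cofacet of smaller-or-equal value (in the codimension-two case, both intermediate facets $\rho_1,\rho_2\supset\omega$ satisfy $f(\rho_i)<f(\tau)$, and $f(\omega)\geq f(\tau)$ would make both of them exceptional cofacets of $\omega$). This suffices for your argument because in every dangerous delete-last step the landing face contains the matched vertex $v$ of $\tau$ (your case-$3$ analysis), or $\tau$ is critical, so no facet of $\tau$ above the landing face is matched with $\tau$; with that lemma in place, your proof becomes correct and is an interesting alternative to the paper's more elementary projection argument.
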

\begin{proof}
We need to prove that no path in $\Delta(F)$ is cyclic. By Lemmata \ref{good_pair} and \ref{good_nonpair}, it is true if the path stays inside one simplex of $L$. 

Let $\Gamma$ be a path in $\Delta(F)$ and let $\Gamma$ include parts of more than one simplex of $L$. Every simplex $\gamma \in \Gamma$ lies in the inner part of some simplex in $L$.
Take these simplexes as a sequence and delete the repeats. The resulting sequence $\Gamma'$ is cyclic if $\Gamma$ is cyclic.

From the definition of $\Delta(F)$ follows, that in every two consequent simplexes in $\Gamma'$ one is a facet of another. Moreover, if the  simplex with the lower dimension goes before the simplex with the higher dimension, then these two simplexes are paired. So, the sequence $\Gamma'$ consists of "face"-steps and "pair"-steps. By the definition of Morse function, no two "pair"-steps can go consequently.

 If no  two "face"-steps go consequently in $\Gamma'$, then $\Gamma'$ is a path in $F$ and cannot be cyclic. If there are at least  two consequent "face"-steps, then the dimension of the simplexes decreases during $\Gamma'$ more times than increases and $\Gamma'$ cannot be cyclic.
\end{proof}

Now we consider for arbitrary $n$ the way that the gradient $n$-paths behave in $\Delta(F)$.
If a gradient $n$-path  starts in a critical simplex $\alpha'$ then it leaves the corresponding simplex $\alpha$ of $L$ through a $(n-1)$-simplex that lies in $(n-1)$-face of $\alpha$. If an $n$-path enters $\Delta(\beta)$, where $\beta$ is non-critical through an $(n-1)$ simplex on its boundary, then it leaves this simplex through another $(n-1)$-simplex that as well lies on the boundary of $\beta$. So this path never gets out of $n$-simplexes of $L$. We get the following lemma.
\begin{lemma}\label{lemma_proper}
Suppose $\Gamma$ is a gradient $n$-path. Then all $n$-simplexes in $\Gamma$ lie in the interiors of the $n$-simplexes of $L$.
\end{lemma}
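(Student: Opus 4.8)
The plan is to make precise the informal observation that an $n$-path can never ``escape'' into the interior of a higher-dimensional simplex of $L$, and that the relevant $n$-simplexes therefore always live in the $n$-dimensional strata. First I would set up the dichotomy: every $n$-simplex $\gamma$ of $\Delta(L)$ lies in the interior of exactly one simplex $\sigma$ of $L$, namely the last simplex in the chain $\lambda(\gamma)$; since $\gamma$ has dimension $n$, the set $\sigma$ must have at least $n+1$ elements, so $\dim \sigma \geq n$. Thus for each $n$-simplex in $\Gamma$, the carrier $\sigma$ is either exactly $n$-dimensional or strictly higher-dimensional, and the entire claim reduces to ruling out the strictly-higher case.

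Next I would argue locally, one $n$-simplex at a time, by induction along the path. The base case is supplied by the description of how gradient $n$-paths begin: by the analysis following Lemma \ref{path_nonpair}, a gradient $n$-path either starts at a critical simplex $\alpha'$ (whose carrier is the $n$-simplex $\alpha$ of $L$) or enters through an $(n-1)$-simplex on the boundary of some $\Delta(\beta)$. In either situation the first $n$-simplex of $\Gamma$ has an $n$-dimensional carrier. For the inductive step I would use Lemmata \ref{lemma_noncritical} and \ref{path_nonpair}: whenever the path is inside $\Delta(\beta)$ for a non-critical $n$-dimensional $\beta$, Lemma \ref{lemma_noncritical}(1) and (3) show that the $(n-1)$-face through which the path proceeds either lands on the boundary of $\Delta(\beta)$ (carrying it to a neighboring $n$-simplex of $L$) or stays within $\Delta(\beta)$, but in both cases the next $n$-simplex again has an $n$-dimensional carrier. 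Similarly, once the path is inside $\Delta(\alpha)$ for a critical $\alpha$, Lemma \ref{path_nonpair} pins down the unique continuation, which remains in $n$-simplexes carried by $\alpha$ until it exits through a boundary $(n-1)$-face into a neighboring $n$-simplex.

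The step I expect to be the main obstacle is verifying that the path cannot ``pass through'' an $(n-1)$-simplex whose carrier is a higher-dimensional simplex of $L$ and thereby jump into $\Delta(\beta)$ for some $\beta$ with $\dim \beta > n$. To close this, I would observe that an $(n-1)$-face shared between two $n$-simplexes $\gamma, \gamma'$ of $\Delta(L)$ with distinct carriers must itself be carried by a common simplex of $L$, and that for this face to be a genuine ``pair''-step of $\Delta(F)$ into a higher-dimensional region, the pairing rules of Section \ref{sec_pairings} would have to pair an $(n-1)$-simplex with an $n$-simplex whose label refines a chain ending strictly above dimension $n$; a direct inspection of the four pairing cases (for non-critical simplexes) and the three cases (for critical simplexes) shows that every pairing keeps the last element of the chain fixed, hence preserves the carrier's dimension. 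Once this carrier-dimension invariance of the pairing is established, the inductive argument above closes and the lemma follows.
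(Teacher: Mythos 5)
Your overall strategy---inducting along the path and tracking, for each simplex of $\Delta(L)$, the carrier (the last simplex of its chain)---is the same as the paper's, and your base case is fine. But the step you yourself single out as the main obstacle is closed by a false claim: it is \emph{not} true that ``every pairing keeps the last element of the chain fixed.'' In the non-critical construction for a pair $(\alpha^{n-1},\beta^{n})\in F$, cases 1 and 2 pair a simplex whose label is an ordered partition of the vertex set of $\alpha$ with the simplex obtained by appending (resp.\ deleting) the singleton $\{n+1\}$ at the end of the label; this changes the last element of the chain from $\alpha$ to $\beta$, so the carrier's dimension jumps by one. (Your claim is correct for the critical groups and for cases 3 and 4, but not for cases 1 and 2.) This carrier-changing pairing is not a pathology to be excluded: it is the only mechanism by which a gradient path of $\Delta(F)$ passes from the subdivision of one $n$-simplex of $L$ into the next, and the paper's own example after Lemma \ref{path_pair} exhibits such a step: $(\{1\}\ \{3\}\ \{4\}\ \{2\})$, carried by the $3$-simplex on the vertices $\{1,2,3,4\}$, is paired with $(\{1\}\ \{3\}\ \{4\}\ \{2\}\ \{5\})$, carried by the $4$-simplex on $\{1,2,3,4,5\}$. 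If your invariance held, no gradient path of $\Delta(F)$ could ever leave a single $\Delta(\beta)$, and the bijection of Theorem \ref{main_thm}.2 would be impossible.

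What actually rules out an escape into higher dimension is different, and it is the ingredient missing from your proposal. A pair-step raises the carrier's dimension exactly when the $(n-1)$-simplex's carrier $\sigma$ is itself paired \emph{upward} in $F$ with a cofacet $\tau$ (case 1 of the group of $(\sigma,\tau)$). If $\dim\sigma=n-1$, this is harmless: the step lands in $\Delta(\tau)$ with $\dim\tau=n$, which is the normal crossing between $n$-simplexes of $L$. The dangerous case is $\dim\sigma=n$ with $\sigma$ paired upward with an $(n+1)$-simplex $\rho$; then the step would enter $\Delta(\rho)$. To exclude it, observe that in this situation \emph{every} simplex inside $\Delta(\sigma)$ is paired (by case 1) with a simplex inside $\Delta(\rho)$; in particular the preceding $n$-simplex $\beta_i$ of the path, which has the same carrier $\sigma$, would be paired with an $(n+1)$-simplex. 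But $\beta_i$ entered the path either as its starting critical simplex---impossible, since then $\sigma$ would be critical rather than paired---or as the larger element of the pair $(\alpha_i,\beta_i)$ of $\Delta(F)$, and a simplex participates in at most one pair. Hence the carrier of every $n$-simplex of the path is critical or paired downward in $F$, the dangerous case never occurs, and the induction closes. This appeal to the ``one pair per simplex'' axiom (equivalently, the paper's observation that a path entering $\Delta(\beta)$ through its boundary must leave through its boundary, via Lemma \ref{lemma_noncritical}) is what your carrier-invariance claim was standing in for, and without it the proof does not go through.
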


 In other words, a gradient $n$-path in $\Delta(F)$ never gets inside the simplexes of $L$ of dimensions  higher than $n$.

Now prove that the critical path structure of the function $\Delta(F)$  is isomorphic to the gradient path structure of the function $F$. We do it by constructing two mappings between the set of gradient paths $Gr(F)$  of the function $F$ and the set of gradient paths $Gr(\Delta(F))$ of the function $\Delta(F)$. These mappings are one-to-one, they are opposite to each other and they respect our bijection $Crit(F) \rightarrow Crit(\Delta(F))$.

\medskip

\textbf{Map $Gr(\Delta(F)) \rightarrow Gr(F)$}

Let $\Gamma$ be a gradient $n$-path in $\Delta(F)$. We construct the corresponding sequence $\Gamma'$ of simplexes of $L$, like we did in the proof of the Lemma \ref{lemma_cyclic}. By Lemma \ref{lemma_proper}, and by construction of $\Delta(F)$ this sequence is a $n$-path in $F$. Moreover, it starts and ends in critical points, since only critical points of $F$ contain critical points of $\Delta(F)$. These critical simplexes correspond to the start and the end of $\Gamma$.

Therefore, $\Gamma'$ is a gradient path of $F$.

\medskip

\textbf{Map $Gr(\Delta(F)) \rightarrow Gr(F)$}

Let $\Gamma$ be a gradient $n$-path in $F$ from a simplex $\beta$ to a simplex $\alpha$:

$$\beta=\beta_0,\  \alpha_1,\ \beta_1,\ \alpha_2,\ \beta_2,\ ...,\ \alpha_k,\ \beta_k,\ \alpha$$

 We construct a corresponding path $\Delta(\Gamma)$ in $\Delta(F)$ that goes from $\beta'$ to $\alpha'$. 
 We define the path inside $\Delta(\beta_i)$ for every $i$ inductively. We start from the last $n$-simplex in $\Gamma$. Our path exits $\Delta(\beta_k)$ through $\alpha'$, which by Lemma \ref{good_pair} defines the path in $\Delta(\beta_k)$ uniquely. For every $1 \leq i \leq k$ the first simplex of the path constructed in $\Delta(\beta_i)$ becomes the last simplex of the path in $\Delta(\beta_{i-1})$ and defines the path in $\Delta(\beta_{i-1})$ uniquely. For the simplex $\beta$ the same holds by the Lemma \ref{good_nonpair}. Therefore, the path $\Delta(\Gamma)$ is defined uniquely.

\bigskip

It is easy to see that for every gradient path $\Gamma$ in $F$ we have $(\Delta(\Gamma))'=\Gamma$ and from Lemma \ref{lemma_proper} follows that for every gradient path $\Gamma$ in $\Delta(F)$ we have $(\Delta(\Gamma'))=\Gamma$. Therefore, our mappings define the bijection between the path structure on $F$ and $\Delta(F)$. Theorem \ref{main_thm} is proved.

\textit{Acknowledgements. This work is supported by  Russian Science Foundation 16-11-10039. The author is a Young Russian Mathematics award winner and would like to thank its sponsors and jury. The author thanks G. Panina for formulating the problem.
}
\newpage

Zhukova Alena Mikhailovna\\
 Saint-Petersburg  State University\\
  Russia, Saint-Petersburg, Universitetskaya nab. 7-9\\ 
 a.zhukova@spbu.ru\\

\end{document}